\DeclareMathOperator*{\g}{\gamma}
\DeclareMathOperator{\dist}{dist}
\newcommand{\PP}{\mathbb{P}}
\newcommand{\RR}{\mathbb{R}}
\newcommand{\NN}{\mathbb{N}}
\newcommand{\E}{\mathcal{E}}
\newcommand{\I}{\mathcal{I}}
\newcommand{\D}{\mathcal{D}}
\newcommand{\K}{\mathcal{K}}
\newcommand{\G}{\mathcal{G}}
\newcommand{\C}{\mathcal{C}}
\newtheorem{theorem}{Theorem}[section]
\newtheorem{lemma}[theorem]{Lemma}
\newtheorem{corollary}[theorem]{Corollary}
\newtheorem{proposition}[theorem]{Proposition}
\newtheorem{question}[theorem]{Question}
\theoremstyle{definition}
\newtheorem{definition}[theorem]{Definition}
\theoremstyle{remark}
\newtheorem{remark}[theorem]{Remark}
\numberwithin{equation}{section}
\date{\today}
\begin{document}
\title{Intersection properties of typical compact sets}
\author{Changhao Chen }


\address{Department of Mathematical Sciences, University of Oulu, P.O. Box 3000, 90014 Oulu, Finland. }\email{changhao.chen@oulu.fi}
\date{\today}

\subjclass[2000]{Primary 28A80, Secondary 37F40}

\keywords{Baire category, Hausdorff metric, intersection}

\thanks{The author acknowledge the support of the Academy of Finland, the Centre of Excellence in Analysis
and Dynamics Research.}

\begin{abstract}
We prove that a typical compact set does not contain any similar copy of a given pattern. We also prove that a typical compact set of $[0,1]^{d} (d\geq 2)$ intersects any $(d-1)$-dimensional plane in at most $d$ points. We study the ``hitting probabilities'' of compact sets in the sense of Baire category. In the end we study the arithmetic properties of typical compact sets in $[0,1]$ and the ``hitting probabilities'' of continuous functions.
\end{abstract}
\maketitle
\section{Introduction}

A subset of a metric space $X$ is of first category if it is a countable union of \emph{nowhere dense} sets (i.e. whose closure in $X$ has empty interior); otherwise it is called of second category.
We say that a \emph{typical} element $x \in X$ has property $P$, if the complement of 
\[
 \{x \in X : x \text{ satisfies } P\}
\] 
is of first category. For the basic properties and various applications of Baire Category, we refer to 
\cite{Oxtoby, Stein}. Let $\K=\K([0,1]^{d})$ be all the compact subsets of unite cube $[0,1]^{d}$. We endow $\K$ with \emph{Hausdorff metric}. Recall that the Hausdorff distance of two compact sets $E$ and $F$ of $\K$ is defined by
\[
 d_H(E,F)=\inf\{\varepsilon>0: E \subset F^\varepsilon\text{ and }F\subset E^\varepsilon\},
\]
where $E^\varepsilon=\{x\in \RR^{d}:\dist(x,E)<\varepsilon\}$.



Davies, Mastrand and Taylor \cite{Davies} constructed a compact set $A\subset [0,1]$ with Hausdorff dimension zero containing a similar copy of any finite set. Chen and Rossi \cite{ChenRossi} showed that a typical compact set is locally rich which means that we can ``see'' all the compact sets when we zooming in at any point of this compact set. Feng and Wu \cite{FengWu} proved that a typical compact set has Hausdorff dimension zero.  It is natural to ask that does a typical compact set containing a similar copy of any finite set. We have the following negative answer. 

\begin{theorem}\label{thm:type}
A typical compact set does not contain a similar copy of a given set $P$ with three distinct points.
\end{theorem}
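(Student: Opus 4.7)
The plan is to apply the Baire category theorem after writing the set of ``bad'' compact sets
\[
\mathcal{A} := \{K \in \K : K \text{ contains a similar copy of } P\}
\]
as the countable union $\mathcal{A} = \bigcup_{n \geq 1} \mathcal{A}_n$, where $\mathcal{A}_n$ consists of those $K$ admitting a similar copy of diameter at least $1/n$. Since every three-point similar copy has strictly positive diameter, this exhausts $\mathcal{A}$, and it suffices to show that each $\mathcal{A}_n$ is both closed and nowhere dense in $(\K, d_H)$.

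For closedness I would take $K_m \to K$ in $d_H$ together with triples $\{q_1^{(m)}, q_2^{(m)}, q_3^{(m)}\} \subset K_m$ similar to $P$ with scaling ratios $r_m$. The diameter constraint forces $r_m \geq (n \diam P)^{-1}$, while containment in $[0,1]^d$ bounds $r_m$ from above; after passing to subsequences so that $r_m \to r > 0$ and $q_i^{(m)} \to q_i$, Hausdorff convergence places $q_i \in K$, and the limit identities $|q_i - q_j| = r|p_i - p_j|$ exhibit a similar copy of $P$ of diameter at least $1/n$ inside $K$.

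The core step is showing that $\mathcal{A}_n$ has empty interior. I would exploit the density in $(\K, d_H)$ of finite subsets of $[0,1]^d$: given a finite approximation $F = \{x_1, \ldots, x_k\}$ of an arbitrary $K_0 \in \K$, the task is to perturb each $x_i$ by less than a prescribed $\delta$ to obtain $F' = \{x_1', \ldots, x_k'\}$ containing no similar copy of $P$ at all. For every ordered triple of distinct indices and every matching with $\{p_1, p_2, p_3\}$, the similarity condition amounts to two equations expressing that the pairwise distance ratios of the triple match those of $P$; this cuts out a real-analytic subset of positive codimension in the parameter space $(\RR^d)^k$. As only finitely many such conditions arise, their union is Lebesgue-null and almost every perturbation produces a valid $F'$, so $\K \setminus \mathcal{A}_n$ is dense.

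The main obstacle is the dimension count certifying non-triviality of the similarity conditions, which comes down to comparing the $3d$ coordinates of a triple in $\RR^d$ with the $d + \binom{d}{2} + 1$ continuous parameters of a similarity of $\RR^d$, modulo the stabiliser of $P$ inside the orthogonal group; a short computation yields codimension $2$ when $d \geq 2$ and codimension $1$ when $d = 1$, which is all the measure-zero argument needs. Granted this, closedness together with dense complement makes each $\mathcal{A}_n$ nowhere dense, and the Baire category theorem shows that a typical compact set avoids $\mathcal{A}$.
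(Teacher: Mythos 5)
Your argument is correct, and it reaches the conclusion by a genuinely different route than the paper. The paper never decomposes the exceptional set or proves any closedness; instead it builds an explicit dense $G_\delta$ set $\G$ from nets $\Gamma_n=\{x_Q: Q\in\D_n\}$ (one point per dyadic cube, chosen greedily so that no triple is similar to $P$) together with a separation radius $\varepsilon_n$ whose key property ($C_2$ of Lemma \ref{lem:distribution}) is that any similar copy of $P$ lying in $\bigcup_Q B(x_Q,\varepsilon_n)$ must sit inside a single ball $B(x_Q,\varepsilon_n)$; a set $E\in\G$ containing a similar copy would then force that copy to have diameter at most $2\varepsilon_{n_k}\to 0$, a contradiction. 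Your decomposition $\mathcal{A}=\bigcup_n\mathcal{A}_n$ by the diameter of the copy, the compactness argument showing each $\mathcal{A}_n$ is closed, and the density of finite sets avoiding $P$ altogether replace exactly this mechanism: the lower bound $1/n$ on the diameter plays the role of the paper's $C_2$, and your perturbation of an arbitrary finite approximation plays the role of the paper's dyadic net $\Gamma_n$. Both proofs rest on the same measure-zero fact (the paper's Lemma \ref{lem:triangle}: for fixed $a\neq b$ the set $\{x: P\backsim\{a,b,x\}\}$ lies on finitely many spheres), and your codimension count is consistent with it, though for the one-point-at-a-time (or Fubini) perturbation the sphere statement is all you need. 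Your organization is the more standard ``closed and nowhere dense'' template and requires the extra (routine) closedness step; the paper's trades that for the slightly fussier uniform separation condition $C_2$ but produces the residual $G_\delta$ set explicitly, which it then reuses verbatim for Theorem \ref{thm:coline}.
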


Note that the Lebesgue density theorem implies that any set of $\RR^{d}$ with positive Lebesgue measure contains a similar copy of any finite set. However, Keleti \cite{Keleti1998, Keleti2008} constructed an $1$-dimensional compact set that does not contain the non-trivial $3$-term arithmetic progressions. Recently, Shmerkin \cite{Shmerkin} constructed an $1$-dimensional Salem set without $3$-term arithmetic progressions also. For more backgrounds  and further results we refer to \cite{Bennett, Chan, Laba,Shmerkin2}. For the basic properties of Hausdorff dimension we refer to \cite{Falconer, Mattila}.

It is not hard to see that if the complement of $A\subset \RR^{d}$ is of first category, then $A$ contains a similar copy of any countable set. This follows by the fact that  for any countable set $\{t_i \in  \RR^{d}: i\in \NN\}$, the intersection $\bigcap_{i\in\NN} (A+t_i)$ is not empty. 
Note that $A$ is not a compact set. However, there exists a second category set $E$ in the plane such that any line intersects $E$ in at most two points, see \cite[Theorem 15.5]{Oxtoby}. For a typical compact set of $\K$ we have the following result.

\begin{theorem}\label{thm:coline}
A typical compact set of $\K([0,1]^{d})(d\geq 2)$ intersects any $(d-1)$-dimensional plane in at most $d$ points.
\end{theorem}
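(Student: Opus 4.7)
The plan is to decompose the set of compact sets failing the conclusion into a countable union of closed nowhere dense subsets of $\K$. For each integer $n \geq 1$, let $B_n$ be the collection of $K \in \K$ that contain $d+1$ points on a common $(d-1)$-hyperplane with pairwise distances at least $1/n$. Since any $d+1$ distinct points have a positive minimum pairwise distance, the compacts violating the theorem are exactly $\bigcup_{n \geq 1} B_n$. It therefore suffices to prove that each $B_n$ is closed and has empty interior in $(\K, d_H)$.

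For closedness, take $K_m \to K$ in Hausdorff metric with $K_m \in B_n$, and let $x_0^{(m)}, \ldots, x_d^{(m)} \in K_m$ be witnesses. By compactness of $[0,1]^d$ we pass to a subsequence so that $x_i^{(m)} \to x_i$; Hausdorff convergence of $K_m$ to $K$ forces each $x_i \in K$; the distance bound $|x_i - x_j| \geq 1/n$ is preserved; and coplanarity of $d+1$ points of $\RR^d$ on a common $(d-1)$-hyperplane is the vanishing of an explicit $(d+1) \times (d+1)$ determinant polynomial in their coordinates, hence a closed condition. So $K \in B_n$.

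For empty interior, since $B_n$ is closed, I show its complement is dense. Given $K \in \K$ and $\delta > 0$, pick a finite $\delta/3$-net $F = \{y_1, \ldots, y_m\} \subset K$, so that $d_H(K, F) \leq \delta/3$. The subset of $([0,1]^d)^m$ consisting of tuples in which some $d+1$ entries lie on a common $(d-1)$-hyperplane is a finite union, over the $\binom{m}{d+1}$ choices of indices, of proper algebraic subvarieties, hence has empty interior. Therefore I can find $y_j' \in [0,1]^d$ with $|y_j' - y_j| < \delta/3$ such that no $d+1$ of the $y_j'$ lie on any common $(d-1)$-hyperplane. The finite set $F' = \{y_1', \ldots, y_m'\}$ then lies outside $\bigcup_n B_n$ and satisfies $d_H(K, F') < \delta$.

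The main obstacle is the perturbation step: one must verify rigorously that general position is a generic condition on $m$-tuples in the unit cube. This reduces to the elementary fact that the relevant coplanarity determinant is a nontrivial polynomial, so its zero set has empty interior in the Euclidean product space, and this remains true after intersecting with the open product of small balls around the $y_j$ inside $[0,1]^d$. Everything else---the closedness of $B_n$, the density of finite sets in $(\K, d_H)$, and the limiting arguments---is standard.
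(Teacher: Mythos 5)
Your proof is correct, but it is organized differently from the paper's. The paper exhibits an explicit dense $G_\delta$ of ``good'' sets: for each $n$ it places one point in general position inside each dyadic cube of $\D_n$ (Lemma \ref{lem:H-set}), attaches a radius $\varepsilon_n$ small enough that any affinely dependent $(d+1)$-tuple drawn from the resulting balls must put two of its points in the \emph{same} ball (condition $C_2$), and then derives a contradiction for $E$ in the $G_\delta$ by letting $\varepsilon_{n_k}\to 0$. You instead work on the complement, slicing the bad set into pieces $B_n$ indexed by a quantitative separation $1/n$ between the $d+1$ coplanar witnesses, proving each $B_n$ closed by a compactness/limit argument (coplanarity being the vanishing of the affine determinant, hence closed), and then proving density of the complement by perturbing a finite $\delta/3$-net into general position. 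The genericity input is the same in both arguments --- the coplanarity determinant is a nontrivial polynomial, so its zero set is Lebesgue-null and one can always nudge finitely many points into general position --- and indeed the paper's Lemma \ref{lem:H-set} uses exactly this measure-zero perturbation. What your route buys is that the somewhat delicate uniformity condition $C_2$ is replaced by the closedness of $B_n$, which absorbs the limiting argument; the price is the extra (easy) verification that the decomposition $\bigcup_n B_n$ exactly captures the failures, which holds because any $d+1$ distinct points have positive minimum pairwise separation. Both proofs are complete; yours is the more standard ``$F_\sigma$ of closed nowhere dense sets'' formulation.
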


Let $A \subset [0,1]^d$ and $\K_A=\{E\in \K: E \cap A \neq \emptyset\}.$ It is reasonable to think that if $A$ is a ``small'' set in $[0,1]^{d}$ then $\K_A$ will be a ``small'' set in $\K$ also.

\begin{theorem}\label{thm:hitting}
A set $A \subset [0,1]^d$ is nowhere dense  in  $[0,1]^d$ if and only if $\K_A$ is nowhere dense  in $\K$.
\end{theorem}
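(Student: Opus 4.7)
The plan is to prove both directions by a direct analysis of $\overline{\K_A}$ in the Hausdorff metric, starting from the key observation that $\overline{\K_A} \subseteq \K_{\bar A}$. This follows because if $E_n \to F$ in $\K$ and $x_n \in E_n \cap A$, then a subsequence of $(x_n)$ converges to some $x$, and since $x_n \in A$ and $\dist(x_n, F) \to 0$, we get $x \in F \cap \bar A$.

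For the forward direction (assume $A$ is nowhere dense), I would show the stronger statement that $\K_{\bar A}$ has empty interior in $\K$; by the inclusion above, this implies $\overline{\K_A}$ has empty interior. Given any $E \in \K$ and $\varepsilon > 0$, first cover $E$ by finitely many balls $B(y_i, \varepsilon/2)$ with $y_i \in E$. Since $\bar A$ has empty interior, each open ball $B(y_i, \varepsilon/2)$ contains a point $z_i \notin \bar A$. The finite set $F = \{z_1, \dots, z_n\}$ satisfies $F \cap \bar A = \emptyset$ and, by a routine check, $d_H(E, F) < \varepsilon$. Thus $F \notin \K_{\bar A} \supseteq \overline{\K_A}$, so $\K_A$ is nowhere dense.

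For the converse, I would prove the contrapositive: if $A$ is not nowhere dense, then $\bar A$ contains an open ball $B(x_0, r) \subset [0,1]^d$, and I would exhibit an explicit open ball in $\K$ contained in $\overline{\K_A}$. The candidate is $\mathcal B = \{F \in \K : d_H(F, \{x_0\}) < r/2\}$, which (using compactness of $F$) equals the set of non-empty compact subsets of the open ball $B(x_0, r/2)$. Given $F \in \mathcal B$ and $\varepsilon > 0$, pick any $y \in F$; since $B(x_0, r) \subset \bar A$ implies $A$ is dense in $B(x_0, r)$, one can choose $y' \in A$ with $|y'-y| < \varepsilon$. Then $F' := F \cup \{y'\} \in \K_A$ and $d_H(F, F') \le |y-y'| < \varepsilon$, showing $F \in \overline{\K_A}$ and hence $\mathcal B \subseteq \overline{\K_A}$.

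The main technical point to get right is not a deep obstruction but a careful bookkeeping item: one must verify that $\overline{\K_A} \subseteq \K_{\bar A}$ (which needs the compactness of $[0,1]^d$ to extract a convergent subsequence from $x_n \in E_n \cap A$) and that the shifts used in each direction remain in $[0,1]^d$, so that the approximating sets $F$ and $F'$ genuinely lie in $\K$. Once those points are verified, both inclusions reduce to the standard fact that a nowhere dense set leaves arbitrarily small room to move a finite set away from it, and that a set with interior absorbs small perturbations of points inside that interior.
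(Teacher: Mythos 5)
Your proof is correct, and the forward direction takes a genuinely different (and somewhat cleaner) route than the paper. The paper proves that $\K_A$ is nowhere dense by an explicit dyadic-cube construction: it approximates $E$ by a finite set $F$ consisting of cube centers (for cubes missing $A$) and of centers of small balls avoiding $A$ (for cubes meeting $A$, using nowhere density inside each such cube), and then exhibits an explicit radius $\varepsilon'=\min_Q r_Q$ with $U_{d_H}(F,\varepsilon')\cap\K_A=\emptyset$. You instead isolate the containment $\overline{\K_A}\subseteq\K_{\bar A}$ (proved by extracting a convergent subsequence from points $x_n\in E_n\cap A$, using compactness of $[0,1]^d$), after which it suffices to perturb a finite $\varepsilon$-net of $E$ off the closed set $\bar A$; this packages the paper's ``separating radius'' step into one general lemma and dispenses with the dyadic machinery entirely. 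The trade-off is that the paper's argument is more constructive and produces the disjoint ball explicitly, while yours is shorter and makes the role of $\bar A$ transparent. Your converse is essentially identical to the paper's: both observe that if $\bar A$ contains an open ball $U$, then the (open, nonempty) family of compact subsets of a slightly smaller ball lies in $\overline{\K_A}$, the only difference being that you spell out the density argument ($F\cup\{y'\}$ with $y'\in A$ close to some $y\in F$) that the paper leaves implicit in the line $\K(U)\subseteq\overline{\K_A}$. The bookkeeping points you flag (staying inside $[0,1]^d$, choosing the ball in the converse away from the boundary of the cube) are real but routine, and your argument handles them.
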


If $A\subset [0,1]^{d}$ is of first category in $[0,1]^{d}$, then $A=\bigcup_{i \in \NN}A_i$ where each $A_i$ is nowhere dense in 
$[0,1]^{d}$. Observe that 
\[
\K_A=\bigcup_{i\in \NN} \K_{A_i}.
\]
Theorem \ref{thm:hitting} claims that $\K_{A_i}$ is nowhere dense in $\K$ for each $i\in\NN$, and hence $\K_A$ is of first category in $\K$. It follows that a typical compact set of $\K$ does not intersects $A$. \v{S}al\'{a}t \cite{Salat} proved that the set of \emph{normal numbers} is of first category. It is also known that the complementary set of \emph{Liouville numbers} is of first category, see \cite[Chapter 2]{Oxtoby}. Thus we obtain that a typical compact set of $\K([0,1])$ is a subset of non-normal Liouville numbers. We collect these facts as the following corollary. 



\begin{corollary}\label{thm:normal}

$(a)$ If $A\subset [0,1]^{d}$ is of first category in $[0,1]^{d}$ then $\K_A$ is of first category in $\K$. 

$(b)$ A typical compact set of $\K([0,1])$ is a subset of non-normal Liouville numbers.
\end{corollary}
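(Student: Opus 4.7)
The plan is straightforward: part (a) is an almost immediate consequence of Theorem \ref{thm:hitting}, as the paper sketches in the paragraph preceding the corollary, and part (b) combines (a) with two classical first-category results cited earlier in the introduction.

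For part (a), I would write $A = \bigcup_{i \in \NN} A_i$ with each $A_i$ nowhere dense in $[0,1]^{d}$, which is possible by hypothesis. The key set-theoretic identity
$$
\K_A = \bigcup_{i \in \NN} \K_{A_i}
$$
holds trivially because a compact set meets $A$ iff it meets at least one $A_i$. Applying Theorem \ref{thm:hitting} to each index shows that every $\K_{A_i}$ is nowhere dense in $\K$, so $\K_A$ is of first category as a countable union of nowhere dense sets.

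For part (b), let $N \subset [0,1]$ denote the set of normal numbers and $L \subset [0,1]$ the set of Liouville numbers. By \v{S}al\'{a}t \cite{Salat} the set $N$ is of first category in $[0,1]$, and by the classical result recorded in \cite[Chapter 2]{Oxtoby} the complement $[0,1]\setminus L$ is of first category as well. Hence
$$
A \;:=\; N \cup \bigl([0,1]\setminus L\bigr)
$$
is of first category in $[0,1]$, and part (a) yields that $\K_A$ is of first category in $\K$. A typical compact set $E \in \K$ therefore satisfies $E \cap A = \emptyset$, which, since $E \subseteq [0,1]$, is equivalent to $E \subseteq [0,1]\setminus A = L \setminus N$. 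In other words, $E$ is contained in the set of non-normal Liouville numbers.

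There is no substantive obstacle in either part: the real work has been carried out upstream in Theorem \ref{thm:hitting} and in the cited theorems on normal and Liouville numbers. The only point to verify is that Theorem \ref{thm:hitting} is applied to sets that are genuinely nowhere dense rather than merely first category; for the pieces $A_i$ in the decomposition of $A$ this holds by construction, so no passage to closures is required and the argument goes through cleanly.
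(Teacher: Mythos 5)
Your proposal is correct and follows essentially the same route as the paper: the decomposition $A=\bigcup_i A_i$ into nowhere dense pieces, the identity $\K_A=\bigcup_i\K_{A_i}$, and the application of Theorem \ref{thm:hitting} are exactly the argument the paper gives in the paragraph preceding the corollary, and part (b) is obtained there in the same way from \v{S}al\'{a}t's theorem and the first-category result on the complement of the Liouville numbers. No discrepancies to report.
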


We do not know that whether $\K_A$ is of first category implies that $A$ is of first category. 

In the following, we study the size of sets formulated under finite steps arithmetic operations of a typical set $A$ of $\K([0,1])$. Let $S^{m}(A)$ be the $m$-th sum set of $A$, and  $\widetilde{P}(A)$ be a set formed under the rule of the polynomial $P$. We show these definitions in Section \ref{section:arithmetic}. Under these notations we have the following result.

\begin{theorem}\label{thm:arithmetic}
For a typical compact set $A$ of $\K([0,1])$, we have that $\dim_H S^{m}(A)=0$ for any $m\in \NN$ and $\dim_H \widetilde{P}(A)=0$ for any polynomial $P$.
\end{theorem}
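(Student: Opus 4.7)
The plan is to construct a comeager set $\mathcal{R}\subset\K([0,1])$ on which both dimension bounds hold simultaneously, by isolating a single combinatorial ``thin covering'' property from which one can control the Hausdorff $s$-content of any polynomial image. The key observation is that if $P$ is a polynomial in $k$ variables with Lipschitz constant $L=L(P)$ on $[0,1]^{k}$ and $U_{1},\ldots,U_{N}$ is a finite open cover of $A$, then the $N^{k}$ sets $P(U_{j_{1}}\times\cdots\times U_{j_{k}})$ cover $\widetilde{P}(A)=P(A^{k})$, each of diameter at most $L\sqrt{k}\,\max_{j}|U_{j}|$. Hence any inequality of the form $N^{k}(\max_{j}|U_{j}|)^{s}<1/n$ immediately yields an upper bound $(L\sqrt{k})^{s}/n$ on the Hausdorff $s$-content of $\widetilde{P}(A)$.

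Motivated by this, for $k,n\in\NN$ and rational $s>0$ I would define
\[
\mathcal{R}_{k,s,n}=\bigl\{A\in\K([0,1]):\ \text{some finite open cover }U_{1},\ldots,U_{N}\text{ of }A\text{ satisfies }N^{k}(\max_{j}|U_{j}|)^{s}<1/n\bigr\},
\]
and set $\mathcal{R}=\bigcap_{k,s,n}\mathcal{R}_{k,s,n}$, the intersection running over the countable index set. I would first check that $\mathcal{R}_{k,s,n}$ is open: if the cover $U_{1},\ldots,U_{N}$ witnesses $A\in\mathcal{R}_{k,s,n}$, then $\delta:=\dist(A,[0,1]\setminus\bigcup_{j}U_{j})>0$, and every $A'$ with $d_{H}(A,A')<\delta$ still lies in $\bigcup_{j}U_{j}$, so the same cover witnesses $A'\in\mathcal{R}_{k,s,n}$. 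Density is equally easy: finite sets are dense in $\K([0,1])$, and a finite set with $N$ points can be covered by $N$ intervals of arbitrarily small common length $r$, for which $N^{k}r^{s}<1/n$ once $r$ is small enough. Thus $\mathcal{R}$ is comeager.

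For $A\in\mathcal{R}$ and a polynomial $P$ in $k$ variables with Lipschitz constant $L$ on $[0,1]^{k}$, fix rational $s>0$ and $\varepsilon>0$, pick $n$ with $(L\sqrt{k})^{s}/n<\varepsilon$, and apply the observation above to the cover witnessing $A\in\mathcal{R}_{k,s,n}$; this yields a cover of $\widetilde{P}(A)$ whose $s$-power sum is less than $\varepsilon$. Since $\varepsilon>0$ is arbitrary, the Hausdorff $s$-content, and hence the $s$-dimensional Hausdorff measure, of $\widetilde{P}(A)$ is zero, so $\dim_{H}\widetilde{P}(A)\leq s$; letting $s\to 0^{+}$ through rationals gives $\dim_{H}\widetilde{P}(A)=0$. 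The sum-set statement is the special case $P(x_{1},\ldots,x_{m})=x_{1}+\cdots+x_{m}$, and so $\dim_{H}S^{m}(A)=0$ for every $m\in\NN$.

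The main obstacle is precisely the uncountable quantifier ``for every polynomial $P$'': a natural attempt is to fix a countable dense family of polynomials (say, with rational coefficients), prove the conclusion for each, and then extend by approximation, but Hausdorff dimension is not even upper semicontinuous in the Hausdorff metric, so the extension step fails. The combinatorial condition $N^{k}(\max_{j}|U_{j}|)^{s}<1/n$ sidesteps this by giving a single countable family of open dense sets whose intersection is comeager, and from which the Lipschitz estimate extracts the dimension conclusion uniformly over all polynomials $P$.
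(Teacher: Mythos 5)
Your proof is correct and follows essentially the same route as the paper's: both construct a single comeager family of compact sets admitting covers by $N$ sets of diameter $r$ with $N^{k}r^{s}$ arbitrarily small (the paper realizes this concretely as the $\varepsilon_n$-neighbourhoods, $\varepsilon_n=2^{-n^{2}}$, of subsets of the grids $\{k2^{-n}\}$), then cover $A^{k}$ by the $N^{k}$ products and push the estimate through the Lipschitz polynomial map. Your abstract parametrization by $(k,s,n)$ and the paper's Lemma \ref{lem:product} (that $\dim_H A^{m}=0$ on the comeager set $\G$) are two packagings of the same counting argument, and both yield a comeager set independent of $m$ and $P$, which is exactly how the paper also handles the quantifier over all polynomials.
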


The paper is organized as follows. Theorems \ref{thm:type} and \ref{thm:coline} are proved in section \ref{section:H}.  Theorem \ref{thm:hitting} is proved in section  \ref{section:hitting}. Theorem \ref{thm:arithmetic} is proved in Section \ref{section:arithmetic}. In the end we study the `` hitting probabilities'' of continuous function.

\section{Proofs of Theorems \ref{thm:type} and \ref{thm:coline}} \label{section:H}

Let $A, B \subset \RR^{d}$. If there exist $\lambda>0$ and an isometric map $\varphi$ on $\RR^{d}$ such that $\varphi(\lambda A)=B$, then we say that $A$ is similar to $B$ and denote this by $A\backsim B$. If there is a subset $A'\subset A$ such that $A'\backsim B$, then we say that $A$ contains a similar copy of $B$. For each $n\in \NN$, let $\D= \bigcup_{n\in \NN}\D_n$ where $\D_n$ is the family of  $2^{n}$-adic closed subcubes of $[0,1]^d$, i.e. 	
\[
\D_n=\Big\{\prod^d_{k=1}[i_{k}2^{-n}, (i_{k}+1)2^{-n}]:  0\leq i_{k}\leq 2^{n}-1 \Big\}.
\]

For a set $A\subset \RR^{d}$, denote by $\partial A$ the boundary of $A$, denote by $|A|$ the diameter of $A$. For two points $x, y \in \RR^{d}$, denote by  $|x-y|$ the Euclidean metric. Denote by $\mathcal{L}^{d}$ the $d$-dimensional Lebesgue measure. Let $\{x_1,x_2,x_3\} \subset \RR^{d}$  be three distinct points. Define
\[
R(x_1,x_2,x_3)=\Big\{\frac{|x_i-x_k|}{|x_j-x_k|}: i\neq j, j\neq k, i\neq k \Big\}.
\]
\begin{lemma}\label{lem:triangle}
Let $P=\{p_1,p_2,p_3\}$ be three distinct points of $\RR^{d}$ and $a,b \in\RR^{d}, a\neq b$. Then $\mathcal{L}^{d}(P')=0$, where 
\[
P'=\{x\in \RR^{d}: P\backsim \{a,b,x\} \}.
\]
\end{lemma}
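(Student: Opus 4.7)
The plan is to translate the similarity $P\backsim \{a,b,x\}$ into explicit distance constraints on $x$, enumerate the (finitely many) ways of matching the vertex $x$ to a vertex of $P$, and observe that each choice confines $x$ to the intersection of two distinct spheres in $\RR^d$.

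First I would recall that $P\backsim \{a,b,x\}$ is equivalent to the existence of a bijection from $\{p_1,p_2,p_3\}$ to $\{a,b,x\}$ which scales every pairwise distance by a common factor $\lambda>0$; this is because any distance-preserving matching between two triples of points of $\RR^d$ is realised by some isometry of $\RR^d$. Parameterising such bijections by permutations $\sigma \in S_3$ via $a\leftrightarrow p_{\sigma(1)}$, $b\leftrightarrow p_{\sigma(2)}$, $x\leftrightarrow p_{\sigma(3)}$, the equation $|a-b|=\lambda\,|p_{\sigma(1)}-p_{\sigma(2)}|$ forces
\[
\lambda_\sigma := \frac{|a-b|}{|p_{\sigma(1)}-p_{\sigma(2)}|}>0,
\]
which is well defined because $a\neq b$ and the $p_i$ are distinct. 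The remaining two equations become
\[
|x-a| = \lambda_\sigma |p_{\sigma(1)}-p_{\sigma(3)}| \quad \text{and} \quad |x-b| = \lambda_\sigma |p_{\sigma(2)}-p_{\sigma(3)}|,
\]
so $x$ is forced onto the intersection of two $(d-1)$-spheres with distinct centres $a$ and $b$. Subtracting the squares of these two equations yields a single affine equation in $x$, so the intersection lies in a hyperplane of $\RR^d$ (it is in fact either empty, a single point, or a $(d-2)$-sphere), and therefore has $d$-dimensional Lebesgue measure zero.

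Finally, since
\[
P' \subset \bigcup_{\sigma \in S_3} \bigl\{x\in\RR^d : |x-a|=\lambda_\sigma |p_{\sigma(1)}-p_{\sigma(3)}|,\ |x-b|=\lambda_\sigma |p_{\sigma(2)}-p_{\sigma(3)}|\bigr\},
\]
it is a finite union of null sets, so $\mathcal{L}^d(P')=0$. There is no substantial obstacle in this argument; the only minor caveat is the one-dimensional case $d=1$, where each ``sphere'' degenerates to a pair of points and the intersection is at most two points, hence still null.
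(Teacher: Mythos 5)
Your proof is correct and follows essentially the same route as the paper: the paper's one-line argument also reduces the similarity condition (via equality of the distance-ratio sets $R(p_1,p_2,p_3)=R(a,b,x)$) to the observation that $x$ is confined to finitely many spheres, hence a Lebesgue-null set. Your version simply makes this explicit by fixing the scaling factor from $|a-b|$ and writing out the two sphere equations per matching, which is a more detailed rendering of the same idea.
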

\begin{proof}
If $P\backsim \{a,b,x\}$, then  $R(p_1,p_2,p_3)=R(a,b,x)$. It follows that there are at most $N=N(d)$ balls $\{B_i\}_{i=1}^{N}$ such that $P'\subset \cup_{i=1}^{N}\partial B_i$, and hence $\mathcal{L}^{d}(P')=0.$
\end{proof}

\begin{lemma}\label{lem:distribution}
Let $P=\{p_1,p_2,p_3\}$ be three distinct points of $[0,1]^{d}$. Then for any $n\in \NN$, there exists $\Gamma_n=\{x_Q: Q\in \D_n\}$ with $x_Q \in Q$ such that any three distinct points of $\Gamma_n$ is not similar to $P$. Moreover there exists $\varepsilon=\varepsilon_n$ such that the  following two conditions hold.

$(C_1)$ $B(x_Q,\varepsilon) \subset Q $ for each $Q\in \D_n$.

$(C_2)$ For any $\{a_1, a_2, a_3\} \subset \bigcup_{Q\in \D_n} B(x_Q,\varepsilon)$ which is  similar to $P$, there exists $Q\in \D_n$ such that $\{a_1, a_2, a_3\} \subset  B(x_Q,\varepsilon)$. 
\end{lemma}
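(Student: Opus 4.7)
The plan is to construct $\Gamma_n$ greedily, cube by cube, using Lemma \ref{lem:triangle} to avoid creating any similar copy of $P$, and then to choose $\varepsilon$ small enough by combining continuity/finiteness of $\D_n$ with the positivity of the smallest ratio in $R(P)$.

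\emph{Construction of $\Gamma_n$.} Enumerate $\D_n=\{Q_1,\ldots,Q_N\}$ and proceed by induction. Choose $x_{Q_1}\in Q_1^\circ$ arbitrarily. Having chosen $x_{Q_1},\ldots,x_{Q_{k-1}}$, set
\[
F_k=\bigcup_{1\le i<j\le k-1}\{x\in Q_k:\{x_{Q_i},x_{Q_j},x\}\backsim P\}.
\]
By Lemma \ref{lem:triangle} this is a finite union of $\mathcal{L}^d$-null sets, so $\mathcal{L}^d(F_k)=0$, and since $\mathcal{L}^d(Q_k^\circ)>0$ we may pick $x_{Q_k}\in Q_k^\circ\setminus F_k$. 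Iterating produces $\Gamma_n$ in which no three distinct points are similar to $P$, and each $x_Q$ lies in the open cube $Q^\circ$.

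\emph{Choice of $\varepsilon$.} Set $\varepsilon_1=\min_Q\dist(x_Q,\partial Q)>0$; then $\varepsilon\le\varepsilon_1$ ensures $(C_1)$. For $(C_2)$, suppose $\{a_1,a_2,a_3\}\backsim P$ with $a_i\in B(x_{Q_i},\varepsilon)$ and the cubes $Q_i$ not all equal. If the $Q_i$ are pairwise distinct, then $\{x_{Q_1},x_{Q_2},x_{Q_3}\}\not\backsim P$ by construction; continuity of $(y_1,y_2,y_3)\mapsto R(y_1,y_2,y_3)$ on triples of distinct points, together with finiteness of $\D_n$, yields $\varepsilon_2>0$ such that perturbations of size $<\varepsilon_2$ preserve dissimilarity. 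If instead two indices share a cube, say $Q_1=Q_2\ne Q_3$, let $\delta=\min_{Q\ne Q'}|x_Q-x_{Q'}|>0$ and $r_0=\min R(P)>0$; then $|a_1-a_2|<2\varepsilon$ while $\min_{i=1,2}|a_i-a_3|\ge\delta-2\varepsilon$, so the ratio of shortest to longest side of $\{a_1,a_2,a_3\}$ is at most $2\varepsilon/(\delta-2\varepsilon)$, which is less than $r_0$ once $\varepsilon<\varepsilon_3:=r_0\delta/(2+2r_0)$, contradicting similarity to $P$. Taking $\varepsilon=\min(\varepsilon_1,\varepsilon_2,\varepsilon_3)$ finishes the proof.

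The main obstacle is the second case in $(C_2)$: continuity alone does not rule out two of the $a_i$ clustering in one ball, and one must exploit that the side-length ratios of $P$ are bounded away from zero -- which is precisely where the hypothesis that $P$ has three distinct points enters essentially.
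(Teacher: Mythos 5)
Your proposal is correct and follows essentially the same route as the paper: the greedy, cube-by-cube selection of $x_{Q_k}$ outside a finite union of null sets supplied by Lemma \ref{lem:triangle} is exactly the paper's construction, and your $\varepsilon_1,\varepsilon_2$ correspond to the paper's $\varepsilon'$ and $\varepsilon''$. The one substantive difference is in $(C_2)$: the paper only records the case in which $a_1,a_2,a_3$ lie in balls around three \emph{distinct} points of $\Gamma_n$, leaving tacit the case where two of the $a_i$ fall into the same ball $B(x_Q,\varepsilon)$; your quantitative comparison of $2\varepsilon/(\delta-2\varepsilon)$ with $r_0=\min R(p_1,p_2,p_3)>0$ handles precisely that remaining case, so on this point your write-up is actually more complete than the paper's.
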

\begin{proof}
Let $\D_n=\{Q_i: 1\leq i\leq 2^{nd}\}$. Assume that we have chosen $m$ points $K_m=\{x_i : 1\leq  i\leq m\}$ with $x_i\in Q_i, 1\leq i\leq m$ such that any three distinct  points of $K_m$ is not similar to $P.$ 
For any two points $x_i,x_j$ of $K_m$, by Lemma \ref{lem:triangle} we obtain that the set 
\[
\{x\in \RR^{d}: P\backsim\{x_{i},x_j,x\}\}
\]
has Lebesgue measure zero. Note that there are at most $m(m-1)/2$ pairs of $(i,j)$. It follows that there exists an interior point $x_{m+1}$ of $Q_{m+1}$ such that any three points of 
$\{x_i: 1\leq i \leq m+1\}$ is not similar to $P.$ We use the same way to find points $x_{m+2},\cdots$. In the end, we obtain a point $x_{2^{dn}}$ from $Q_{2^{dn}}$. Let $\Gamma_n$ be the collection of chosen points. 

Since each $x_i$ is an interior point of $Q_i, 1\leq i\leq 2^{nd}$, there is $\varepsilon'>0$ such that the condition $C_1$ holds. Observe that for any three distinct points $\{x_{i_1},x_{i_2},x_{i_3}\} \subset \Gamma_n$, there is a positive constant $\varepsilon_{i_1,i_2,i_3}$ such that 
any there points $\{a_1,a_2,a_3\}$ with $a_{k}\in B(x_{i_k},\varepsilon_{i_1,i_2,i_3}), k=1,2,3$ is not similar to $P$. Let $\varepsilon''$ be the minimal value over all the possible $\varepsilon_{i_1,i_2,i_3}$, and 
$\varepsilon=\min\{\varepsilon',\varepsilon''\}$. Thus we complete the proof.
\end{proof}

\begin{definition}
Recall that the points $\{x_1,x_2,\cdots,x_m\}$ are called affinely independent if the vectors $x_2-x_1,\cdots, x_{m}-x_1$ are linearly independent.
Let $A\subset \RR^{d},d \geq 2$. We say $A$ is affinely independent if any $k ( 3\leq  k\leq d+1)$ distinct points of $A$ is affinely independent.
\end{definition}

Let $A\subset \RR^{d}, d\geq 2$ be a set that intersects any $(d-1)$-dimensional plane in at most $d$ points. Observe that this is equivalent to say that any $(d+1)$ points $\{x_1,\cdots,x_{d+1}\}\subset A$ is affinely independent.

\begin{lemma}\label{lem:H-set}
For each $\D_n, n\in \NN,$ there exists $\Gamma_n=\{x_Q: Q\in \D_n\}$ with $x_Q \in Q$ such that $\Gamma_n$ is affinely independent. Moreover there exists $\varepsilon=\varepsilon_n$ such that the  following two conditions hold.

$(C_1)$ $B(x_Q,\varepsilon) \subset Q $ for each $Q\in \D_n$.

$(C_2)$ For any $\{a_1,\cdots, a_{d+1}\} \subset \bigcup_{Q\in \D_n} B(x_Q,\varepsilon)$ which is not affinely independent, there exists $Q\in \D_n$ and $\{a_i, a_j\} \subset \Gamma_n$ such that $\{a_i, a_j\} \subset  B(x_Q,\varepsilon)$. 
\end{lemma}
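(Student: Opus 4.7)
The plan is to mimic the construction of Lemma \ref{lem:distribution}, with the ``similar to $P$'' condition replaced by an affine dependence condition. Enumerate $\D_n=\{Q_i : 1\leq i\leq 2^{nd}\}$ and build $\Gamma_n=\{x_1, x_2,\ldots\}$ with $x_i$ in the interior of $Q_i$ by induction on $i$. Suppose we have chosen $\{x_1,\ldots,x_m\}$ such that any $k$ ($3\leq k\leq d+1$) distinct points from it form an affinely independent set. To add $x_{m+1}\in\mathrm{int}(Q_{m+1})$, I would avoid, for every subset $S\subset \{x_1,\ldots,x_m\}$ with $2\leq |S|\leq d$, the affine span of $S$.

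The key observation is that each such affine span has dimension at most $|S|-1\leq d-1$ and is therefore $\mathcal{L}^{d}$-null in $\RR^{d}$. Since there are only finitely many subsets $S$, their union is null, while $\mathrm{int}(Q_{m+1})$ has positive $\mathcal{L}^{d}$-measure; hence a valid $x_{m+1}$ exists. Any new affine dependence created by adding $x_{m+1}$ would force $x_{m+1}$ to lie on the affine span of $2\leq |S|\leq d$ earlier points, which was excluded. By induction $\Gamma_n$ has the required affine independence property.

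To produce $\varepsilon$, I first pick $\varepsilon'>0$ with $B(x_Q,\varepsilon')\subset Q$ for every $Q\in \D_n$, which is possible because each $x_Q$ is an interior point of $Q$. For $(C_2)$ I use that affine independence is an open condition: for each ordered $(d+1)$-tuple $(i_1,\ldots,i_{d+1})$ of distinct indices, the determinant of the matrix with columns $x_{i_2}-x_{i_1},\ldots,x_{i_{d+1}}-x_{i_1}$ is nonzero, so by continuity there is $\varepsilon_{i_1,\ldots,i_{d+1}}>0$ such that perturbing each $x_{i_j}$ within that radius preserves non-vanishing of the determinant. Set $\varepsilon''$ to be the minimum of these finitely many numbers and $\varepsilon=\min(\varepsilon',\varepsilon'')$. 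Then $(C_2)$ follows: if $\{a_1,\ldots,a_{d+1}\}\subset \bigcup_{Q\in\D_n} B(x_Q,\varepsilon)$ is not affinely independent, the $a_j$'s cannot come one from each of $d+1$ distinct balls, so at least two of them must lie in a common $B(x_Q,\varepsilon)$.

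The main subtlety is in the inductive step, where I must ensure that \emph{every} subset of size between $2$ and $d$ of the already chosen points has its affine span avoided by $x_{m+1}$, not merely the full set $\{x_1,\ldots,x_m\}$. This is precisely what guarantees that every $k$-subset with $3\leq k\leq d+1$ of the resulting $\Gamma_n$ is affinely independent, matching the definition required for the application to Theorem \ref{thm:coline}.
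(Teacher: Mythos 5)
Your proposal is correct and follows essentially the same route as the paper: an inductive choice of $x_{m+1}$ in the interior of $Q_{m+1}$ avoiding a finite union of Lebesgue-null sets, followed by choosing $\varepsilon$ via the openness of affine independence. Your version is in fact slightly more explicit than the paper's (you identify the exceptional sets as affine spans of all subsets of size $2$ to $d$, whereas the paper only invokes $d$-element subsets, which suffices by heredity of affine independence; and you spell out the determinant continuity argument that the paper leaves implicit), but the underlying argument is the same.
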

\begin{proof}
Let $\D_n=\{Q_i: 1\leq i\leq 2^{nd}\}$. Assume that we have chosen $m$ points $K_m=\{x_i : 1\leq  i\leq m\}$ with $x_i\in Q_i, 1\leq i\leq m$ such that $K_m$ is affinely independent. 
Observe that for any $d$ points $x_{i_k}, 1\leq k \leq d$ of $K_m$, the set 
\[
\{x\in \RR^{d}: \{x\}\cup\{x_{i_k}:1 \leq k\leq  d\}   \text{ is not affinely independent }\}
\]
has Lebesgue measure zero. Note that there are at most finite elements of $(i_i,i_2,\cdots,i_d)$. It follows that there exists an interior point $x_{m+1}$ of $Q_{m+1}$ such that  
$\{x_i: 1\leq i \leq m+1\}$ is affinely independent. We use the same way to find points $x_{m+2},\cdots$. In the end, we obtain a point $x_{2^{dn}}$ from $Q_{2^{dn}}$. Let $\Gamma_n$ be the collection of chosen points.

Since each $x_i$ is an interior point of $Q_i, 1\leq i\leq 2^{nd}$, there is  $\varepsilon'>0$ such that the condition $C_1$ holds. Observe that for any $d+1$ distinct points $\{x_{i_1},\cdots,x_{i_{d+1}}\} \subset \Gamma_n$, there is a positive constant $\varepsilon_{i_1,\cdots,x_{i_{d+1}}}$ such that 
any $(d+1)$ points $\{a_{1},\cdots,a_{d+1}\}$ with 
\[
a_{k}\in B(x_{i_k},\varepsilon_{i_1,\cdots,x_{i_{d+1}}}), 1\leq k\leq d+1
\] is not affinely independent. Let $\varepsilon''$ be the minimal value over all the possible $\varepsilon_{i_1,\cdots,x_{i_{d+1}}}$and $\varepsilon=\min\{\varepsilon',\varepsilon''\}$.  
\end{proof}

In fact we can also choose the sets $\Gamma_n$ of Lemma \ref{lem:distribution} and Lemma \ref{lem:H-set} in a probability way. For each $Q\in \D_n$, we randomly choose a point $x_Q \in Q$ under the law of 	uniform distribution. The choices are independent for different cubes of $\D_n$. Denote by $\Gamma_n^{\omega}$ the random chosen points. It is not hard to show that with probability one $\Gamma_n^{\omega}$ has the same properties as $\Gamma_n$. We show the outline for this argument.

\begin{proposition} With probability one $\Gamma_n^{\omega}$ has the same properties 
as $\Gamma_n$ in Lemma \ref{lem:distribution} 
and Lemma \ref{lem:H-set}.
\end{proposition}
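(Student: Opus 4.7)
The plan is to reduce everything to a union bound over finitely many tuples of cubes, using the zero–Lebesgue-measure facts already established in the proofs of Lemma \ref{lem:distribution} and Lemma \ref{lem:H-set}. Let $\D_n=\{Q_i: 1\leq i\leq 2^{nd}\}$ and write $x_i^\omega$ for the random point drawn uniformly from $Q_i$; the $x_i^\omega$ are independent.

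First I would handle the property from Lemma \ref{lem:distribution}. Fix any triple of distinct indices $(i,j,k)$. Conditioning on $x_i^\omega$ and $x_j^\omega$, Lemma \ref{lem:triangle} says that the set of $x$ for which $\{x_i^\omega, x_j^\omega, x\}\backsim P$ has $\mathcal{L}^{d}$-measure zero. Since $x_k^\omega$ is uniform on $Q_k$, independent of $(x_i^\omega,x_j^\omega)$, Fubini gives probability zero that $\{x_i^\omega, x_j^\omega, x_k^\omega\}$ is similar to $P$. As there are only finitely many such triples, a union bound shows that with probability one no three points of $\Gamma_n^{\omega}$ are similar to $P$. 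The same argument, now applied to $(d+1)$-tuples and using the fact that for fixed $x_{i_1},\ldots,x_{i_d}$ the set of $x$ making $\{x, x_{i_1},\ldots,x_{i_d}\}$ affinely dependent lies in an affine hyperplane and hence has $\mathcal{L}^d$-measure zero, yields with probability one that $\Gamma_n^{\omega}$ is affinely independent.

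Next I would verify the $\varepsilon_n$ part, i.e. conditions $(C_1)$ and $(C_2)$. Almost surely each $x_i^\omega$ lies in the interior of $Q_i$, so an $\varepsilon'>0$ with $B(x_i^\omega,\varepsilon')\subset Q_i$ for all $i$ exists. For $(C_2)$ in the setting of Lemma \ref{lem:distribution}: for each fixed triple $(i,j,k)$ of distinct indices, since similarity is a closed condition on $(a_1,a_2,a_3)$ and no such triple picked from $\{x_i^\omega, x_j^\omega, x_k^\omega\}$ is similar to $P$, a compactness/continuity argument furnishes a positive $\varepsilon_{i,j,k}$ such that no triple from $B(x_i^\omega,\varepsilon_{i,j,k})\times B(x_j^\omega,\varepsilon_{i,j,k})\times B(x_k^\omega,\varepsilon_{i,j,k})$ is similar to $P$. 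Taking the minimum of these $\varepsilon_{i,j,k}$ over the finitely many triples, together with $\varepsilon'$, defines $\varepsilon_n$. The argument for Lemma \ref{lem:H-set} is identical with $(d+1)$-tuples in place of triples and affine independence in place of non-similarity.

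The only mild obstacle is being careful with Fubini and the independence structure: one must order the coordinates so that the last one chosen is integrated against a set of $\mathcal{L}^d$-measure zero, and one must observe that the relevant bad events (similarity of a triple, affine dependence of a $(d+1)$-tuple, $x_i^\omega\in\partial Q_i$) are all measurable and that the total number of tuples to union-bound over is $\binom{2^{nd}}{3}$, respectively $\binom{2^{nd}}{d+1}$, hence finite. Everything else is a direct translation of the deterministic proofs of Lemmas \ref{lem:distribution} and \ref{lem:H-set} into the probabilistic setting.
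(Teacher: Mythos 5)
Your proposal is correct and follows essentially the same route as the paper: a Fubini argument reducing each bad event to a set of $\mathcal{L}^d$-measure zero (via Lemma \ref{lem:triangle} for similarity, and the hyperplane observation for affine dependence), followed by a union bound over the finitely many tuples of cubes, plus the almost-sure interiority of each $x_i^\omega$. The paper phrases the affine-independence step as a telescoping chain of conditional probabilities rather than your direct hyperplane argument, but this is a cosmetic difference, and your explicit verification of $(C_1)$ and $(C_2)$ simply spells out what the paper delegates to the deterministic constructions in Lemmas \ref{lem:distribution} and \ref{lem:H-set}.
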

\begin{proof}
Let $\D_n=\{Q_1, Q_2,\cdots,Q_{2^{nd}}\}$. Lemma \ref{lem:triangle} implies that  conditional on $x_1\in Q_1,x_2\in Q_2$, the probability of the event
$P \backsim\{x_1,x_2,x_3\}$ is zero. Therefore we have that $\PP(P\backsim\{x_1,x_2,x_3\} )=0.$ It follows that 
\begin{align*}
\PP(\text{ exists } \{x_i,&x_j,x_k\} \subset \Gamma_n^{\omega} \text{ such that } \{x_i,x_j,x_k\}\backsim P )\\
&\leq \sum_{i,j,k}\PP(\{x_i,x_j,x_k\}\backsim P)=0.
\end{align*}
Thus we obtain that with probability one any three points of $\Gamma_n^{\omega}$ is not similar to $P$. 

Observe that 
\[
\PP(\{ x_1, x_2, x_3\} \text{ is affinely independent })=1.
\] 
Let $A_k$ be the event 
\[
\{x_1,x_2,\cdots x_k\} \text{ is affinely independent, } 3\leq k\leq d+1.
\]
Then it is not hard to see that $\PP(A_{k+1} \big | A_k)=1$, and $\PP(A_{k+1}|  A_k^{c})=0.$
Thus we have
\begin{align*}
\PP(A_{d+1})&=\PP(A_{d+1}| A_d)\PP(A_d)+\PP(A_{d+1}| A_d^{c})\PP(A_d^{c}) \\
&=\PP(A_d)=\cdots=\PP(A_3)=1.
\end{align*}
It follows that $\PP(A_{d+1}^{c})=0$, and thus
\begin{align*}
&\PP(  \Gamma_n^{\omega} \text{ is not affinely independent })\\
&\leq \sum_{i_{1},\cdots,i_{d+1}}\PP(\{x_{i_1},\cdots x_{i_{d+1}}\} \text{ is not affinely independent })=0.
\end{align*}

Since the boundary of cube has Lebesgue measure zero, we obtain that  with probability one $x_i$  is an interior point of $Q_i, 1\leq i\leq 2^{nd}$.
\end{proof}

\begin{proof}[Proof of Theorem \ref{thm:type}]
Let $ P=\{p_1,p_2,p_3\} \subset [0,1]^{d}$.  For each $n\in \NN$, let $\Gamma_n$ be the set in Lemma \ref{lem:distribution} and $P_n$ be the power set of $\Gamma_n$. Recall that the power set of a set $X$ is the collection of all the subset of $X$. Let 
\[
\G= \bigcap^{\infty}_{k=1}\bigcup^{\infty}_{n=k} \bigcup_{\g\in P_n}U_{d_H}(\g,\varepsilon_n),
\]
where $U_{d_H}(\g,\varepsilon_n)$ is an open set of $(\K, d_H)$ with center $\g$ and radius $\varepsilon_n$. Note that $\{\g: \g \in P_n, n\in \NN\}$ is a countable dense subset in $\K.$ Thus  
\[
\bigcup^{\infty}_{n=k} \bigcup_{\g\in P_n}U_{d_H}(\g,\varepsilon_n)
\]
is a dense open set in $\K$. It follows that the complementary set of $\G$ is of first category. In the following we intend to show that any element of $\G$ does not contain a similar copy of $\{p_1,p_2,p_3\}$. 

Let $E\in \G$, then there exist $n_k\nearrow \infty$ and $\g_{n_k} \in P_{n_k}$ such that $E\in \bigcap^{\infty}_{k=1} U_{d_H}(\g_{n_k},\varepsilon_{n_k})$.  Suppose that there is $\{x_1,x_2,x_3\} \subset E$ which is similar to $F$. By the condition $C_2$ of Lemma \ref{lem:distribution}, there is $Q\in \D_{n_k}$ such that 
\[
\{x_1,x_2,x_3\} \subset B(x_Q, \varepsilon_{n_k}),
\]
 and hence 
\[
 |\{x_1,x_2,x_3\}|\leq 2\varepsilon_{n_k}\leq \sqrt{d}2^{-n_k}\rightarrow 0. 
\]
This is a contradiction. Thus we complete the proof.
\end{proof}

\begin{proof}[Proof of Theorem \ref{thm:coline}]
For each $n\in \NN$, let $\Gamma_n$ be the set  in Lemma \ref{lem:H-set} and $P_n$ be the power set of $\Gamma_n$. Let 
\[
\G= \bigcap^{\infty}_{k=1}\bigcup^{\infty}_{n=k} \bigcup_{\g\in P_n}U_{d_H}(\g,\varepsilon_n).
\]
Then the complementary set of $\G$ is of first category.  

Let $E\in \G$, then there exist $n_k\nearrow \infty$ and $\g_{n_k} \in P_{n_k}$ such that $E\in \cap^{\infty}_{k=1} U_{d_H}(\g_{n_k},\varepsilon_{n_k})$. Suppose that $E$ is not affinely independent. Thus there exists $\{a_1,\cdots,a_{d+1}\} \subset E$ such that $\{a_1,\cdots,a_{d+1}\}$ is not affinely independent. For each $n_k$, there exists $\g\in \Gamma_n$ such that   
\[
\{a_1,\cdots,a_{d+1}\}\subset \bigcup_{x\in \gamma}B(x,\varepsilon_{n}).
\]
By the condition $C_2$ of Lemma \ref{lem:H-set}, we obtain that there exists two distinct points $a_i, a_j$ with $|a_i,a_j|\leq 2\varepsilon_n.$ Note that we may choose $\varepsilon_n$ such that $\varepsilon_n \searrow 0$. It follows that there exist two points of $\{a_1,\cdots,a_{d+1}\}$ with distance zero which is a contradiction. 
\end{proof}

\begin{remark}
Let $E\subset \RR^{d}, d\geq 2$. We say $E$ contains the angle $\theta$ if there are three points $\{x,y,z\}\subset E$ such that the angle between the vectors $y-x$ and $z-x$ is $\theta$, and write $\angle \theta \in E.$ For some results on this topic and further references we refer to \cite{Chan, Shmerkin2}. 

Let $\theta \in [0,\pi)$ and $a,b \in \RR^{d}, d \geq 2$. Then by some elementary geometric arguments, we have
\[
\mathcal{L}^{d}(\{x\in \RR^{d}: \angle \theta \in \{a,b,x\}\})=0.
\]
It follows that for each $n\in \NN$, there is  $\Gamma_n=\{x_Q: Q\in \D_n\}$ such that $\Gamma_n$ does not contain the angle $\theta$. 
Applying the similar argument in the proofs of Theorems \ref{thm:type} and \ref{thm:coline}, we obtain that a typical compact set of $\K([0,1]^{d}),d\geq 2$ does not contain the angle $\theta$. We omit the details here.
\end{remark}

\section{Proof of Theorem \ref{thm:hitting}}\label{section:hitting}

\begin{proof}[Proof of Theorem \ref{thm:hitting}.]
Suppose  $A$ is a nowhere dense subset of $[0,1]^d$. Let $E\in \K, \varepsilon=2^{-n}\sqrt{d}$. Assume first that $E\cap A\neq  \emptyset$. We define 
\[
\E_n=\{Q\in\D_n: Q\cap E \neq \emptyset\}=\E_n'\cup \E_n''
\]
where 
\[
\E_n'=\{Q\in \E_n: Q \cap A =\emptyset\},\, \E_n''=\E_n \backslash \E_n'.
\]
For every $Q \in \E_n'$, let $c_Q$ be the center point of $Q$. For every $Q\in \E_n'',$ since $A$ is nowhere dense, there exists $x_Q\in Q, r_Q >0$ such that 
\[
U(x_Q,r_Q) \subset Q \text{ and } U(x_Q,r_Q) \cap A \neq \emptyset.
\]
Let $F$ be the collection of points $c_Q$ for $Q\in \E_n'$ and $x_Q$ for $Q\in \E_n''$. Then $F\in U_{d_H}(E, 2^{-n}\sqrt{d})$. Let 
\[
\varepsilon'=\min\{r_Q: Q\in \E_n''\}.
\]
Then $U_{d_H}(F,\varepsilon') \cap \K_A=\emptyset$. 

For the case $E\cap A =\emptyset$ we have that  $\E''=\emptyset.$ Let $F$ be the collection of points $c_Q$ for $Q\in \E_n'$. Then 
\[
F\in U_{d_H}(E, 2^{-n}\sqrt{d}) \text{ and } U_{d_H}(F, 2^{-n-1}) \cap \K_A=\emptyset.
\]
By the arbitrary choice of $E\in \K$ and $\varepsilon=2^{-n}\sqrt{d}$, we obtain that $\K_A$ is nowhere dense in $\K$.

Now we assume that there is an open ball $U \subset \overline{A}$ where $\overline{A}$ is the closure of $A$. Let $\K(U)$ be all the compact subsets of $U$, then $\K(U)$ is an open set in $\K$. Observe that  $\K(U)\subset \overline{\K_A}$. Thus we obtain that if $\K_A$ is nowhere dense in $\K$ then $A$ is nowhere dense in $[0,1]^{d}$. 
\end{proof}

\section{Proof of Theorem \ref{thm:arithmetic}}\label{section:arithmetic}

For $A,B \subset \RR$ we define their sum set 
\[
A+B=\{a+b: a\in A, b\in B\}.
\] 
Let $\lambda \in \RR$ and $\lambda A=\{\lambda\times a: a \in A\}$. For $m\in \NN$, define
\[
S^{m}(A):=\Big\{\sum^{m}_{i=1}x_i: x_i \in A, 1\leq i\leq m \Big\},
\] 
\[
A^{m}=\{(x_1,\cdots,x_m): x_i \in A, 1 \leq i \leq m\},
\]
and
\[
T^{m}(A):=\Big\{x_1\times \cdots \times x_m: x_i \in A, 1\leq i\leq m \Big\}.
\]
Let $P(x)=\sum^{n}_{k=0}a_kx^{k}, a_k \in \RR$ be a polynomial. For a set $A \subset \RR$, let
\[ 
\widetilde{P}(A):=\sum_{k=0}^{n}a_k T^{k}(A).
\]
Note that $\widetilde{P}(A)$ is the sum set of $\{a_k T^{k}(A)\}^{n}_{k=0}$. The Hausdorff dimension of $E$ is defined as 
\[
\dim_H E=\inf \{s \geq 0: \mathcal{H}^s(E)=0\},
\]
where $\mathcal{H}^s(E)=\lim_{\delta \rightarrow 0}\mathcal{H}^s_\delta(E),$
and 
\[
\mathcal{H}^s_\delta(E)= \inf \Big\{\sum^{\infty}_{i=1} |U_i|^s: E\subset \bigcup_{i\in \NN} U_i, |U_i| \leq \delta, i \in \NN  \Big\}.
\]

For each $n\in \NN$, let $\varepsilon_n=2^{-n^{2}}$,
\[
\D_n'=\{\frac{k}{2^{n}}: 0\leq k\leq 2^{n}\},
\]
and $P_n$ be the power set of $\D_n'$. Define 
\[
\G= \bigcap_{k=1}^{\infty} \bigcup^{\infty}_{n=k}\bigcup_{\g \in P_n}U_{d_H}(\g, \varepsilon_n).
\]
Applying the same argument as in the proof of Theorem \ref{thm:type}, we have that the complementary of $\G$ is of first category in $\K$.

\begin{lemma}\label{lem:product}
Let $A\in \G$ then $\dim_H A^{m}=0$ for any $m\in \NN$.
\end{lemma}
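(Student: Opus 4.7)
The plan is to unpack what membership in $\G$ means as a covering statement, then to turn such a covering of $A$ into a covering of $A^m$ by cubes and directly estimate Hausdorff $s$-content for arbitrary $s > 0$. The key quantitative input is that the radii $\varepsilon_n = 2^{-n^2}$ decay much faster than the cardinality $|\D_n'| = 2^n + 1$ grows, so any polynomial power $(2^n+1)^m$ cannot compete with $\varepsilon_n^s$.

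First I would observe: if $A\in\G$, then by definition there exist $n_k\nearrow\infty$ and $\gamma_{n_k}\in P_{n_k}$ with $d_H(A,\gamma_{n_k})<\varepsilon_{n_k}$. Since $\gamma_{n_k}\subset \D_{n_k}'$ has at most $2^{n_k}+1$ points, the condition $A\subset \gamma_{n_k}^{\varepsilon_{n_k}}$ says $A$ is contained in a union of at most $2^{n_k}+1$ open intervals of length $2\varepsilon_{n_k}$ each. This is a covering of $A$ by short intervals whose number grows only exponentially in $n_k$.

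Next I would lift this to a covering of $A^m\subset \RR^m$: by taking Cartesian products of these intervals, $A^m$ is covered by at most $(2^{n_k}+1)^m$ cubes of side length $2\varepsilon_{n_k}$, each of diameter at most $\sqrt{m}\cdot 2\varepsilon_{n_k} = 2\sqrt{m}\cdot 2^{-n_k^2}$. Fix any $s>0$ and set $\delta_k = 2\sqrt{m}\cdot 2^{-n_k^2}$, which tends to $0$. Then
\[
\mathcal{H}^{s}_{\delta_k}(A^{m})\ \leq\ (2^{n_k}+1)^{m}\cdot \bigl(2\sqrt{m}\cdot 2^{-n_k^{2}}\bigr)^{s}\ \leq\ C(m,s)\cdot 2^{m n_k}\cdot 2^{-s n_k^{2}},
\]
and since $s n_k^{2} - m n_k\to\infty$ as $k\to\infty$, the right-hand side tends to $0$. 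Hence $\mathcal{H}^{s}(A^{m})=0$ for every $s>0$, which gives $\dim_H A^{m}=0$.

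There is no real obstacle here — the proof is a direct quantitative consequence of the very fast choice $\varepsilon_n=2^{-n^2}$ built into the definition of $\G$. The only thing to double-check is the elementary diameter bound $|I_1\times\cdots\times I_m|\leq\sqrt{m}\max_i|I_i|$ used to pass from a covering of $A$ by intervals to a covering of $A^m$ by cubes, and that the exponent $m$ (fixed) is beaten by the quadratic exponent $n_k^2$ for every positive $s$; both are immediate.
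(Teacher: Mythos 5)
Your proof is correct and follows essentially the same route as the paper: extract from $A\in\G$ a cover of $A$ by at most $2^{n}+1$ balls of radius $\varepsilon_n=2^{-n^2}$ for arbitrarily large $n$, take $m$-fold products to cover $A^m$, and beat the polynomial count $(2^n+1)^m$ with $\varepsilon_n^s$ for every $s>0$. Your bookkeeping is in fact slightly more careful than the paper's (which writes $2^{n+1}$ where $(2^n+1)^m$ is meant), but the argument is identical in substance.
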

\begin{proof}
For any $k\in \NN$ there exist $n\geq k$ and 
\[
\g=\{x_1,\cdots,x_N\}\in P_n
\]
such that $A \in U_{d_H}(\g,\varepsilon_n)$. It follows that 
\[
A\subset \bigcup_{i=1}^{N}B_i
\]
where $B_i:=B(x_i,\varepsilon_n), 1\leq i\leq N.$ Let $m\in \NN$, then
\[
A^{m}\subset \bigcup_{i_1\cdots i_m \in \I^{m}} B_{i_1}\times \cdots \times B_{i_m}
\]
where $\I=\{1,2,\cdots,N\}$. Note that 
\[
|B_{i_1}\times \cdots \times B_{i_m}| \leq \sqrt{m}\varepsilon_n \text{ for any } i_1\cdots i_m \in \I^{m}.
\]
Since $N\leq 2^{n}+1$ and $\varepsilon_n=2^{-n^{2}}$, for any $s>0$ we have 
\[
\mathcal{H}^{s}_{\varepsilon_n\sqrt{m}}(A^{m})\leq N^{m}(\varepsilon_n \sqrt{m})^{s} \leq 2^{n+1}2^{-n^{2}s}\sqrt{m}^{s}.
\]
It follows that $\mathcal{H}^{s}(A^{m})=0$. By the arbitrary choice of $s>0$ we have that $\dim_H A^{m}=0$. Thus we complete the proof.
\end{proof}

\begin{proof}[Proof of Theorem \ref{thm:arithmetic}]
It is clear that $S^{m}(A)=\sqrt{m}\pi_{e}(A^{m})$ where 
$\pi_{e}(A^{m})$ is the orthogonal projection of $A^{m}$ on to the line with direction $e=\sqrt{m}^{-1}(1, 1, \cdots,1).$ Thus $\dim_H S^{m}(A)\leq \dim_H A^{m}$. Therefore by Lemma \ref{lem:product} we obtain $\dim_H S^{m}(A)=0.$  

Suppose that 
\[
P(x)=\sum^{n}_{k=0}a_kx^{k}, a_k \in \RR, a_n\neq 0.
\] 
Does not lose general we may assume $ a_0=0$. Note that  
\[
\widetilde{P}(A)=\{\sum_{k=1}^{n}a_kx_{k,1}\cdots x_{k,k}: 1\leq i \leq k, x_{k,i} \in A\}.
\] 
Define a new function 
\[
\varphi: [0,1]^{\frac{(n+1)n}{2}}\longrightarrow \RR
\]
by 
\[
\varphi(x_{1,1}, x_{2,1},x_{2,2},\cdots, x_{nn})=\sum_{k=1}^{n}a_kx_{k,1}\cdots x_{k,k}.
\]
By the mean value theorem we have that $\varphi$ is a Lipschitz map on $[0,1]^{\frac{(n+1)n}{2}}$. Observe that 
\[
\widetilde{P}(A)=\varphi (A^{\frac{(n+1)n}{2}}).
\]
Thus by Lemma \ref{lem:product} and the fact that Lipschitz map will not increase the Hausdorff dimension, we obtain that $\dim_H \widetilde{P}(A)=0.$
\end{proof}

\begin{remark}
Let $A\subset \RR^{d}$. Then we can also consider the sets $S^{m}(A), A^{m}$. By applying the same arguments in Lemma \ref{lem:product} and in the proof of Theorem \ref{thm:arithmetic}, we have that for a typical compact set $A\in \K([0,1]^{d})$,
\[
\dim_H A^{m}=0 \text{ and } \dim_{H} S^{m}(A)=0 \text{ for any }m\in \NN.
\]
We omit the details here.
\end{remark}

Denote
\[
e^{A}:=\sum_{n=0}^{\infty} \frac{T^{n}(A)}{n!}.
\] 
We consider $e^{A}$ as the limit point of $S_m$ in the space $(\K(\RR), d_H)$ where 
\[
S_m:=\sum_{n=0}^{m} \frac{T^{n}(A)}{n!}
\]  
is a sum set of $\{\frac{T^{n}(A)}{n!}\}^{m}_{n=0}$. Note that $\{S_m\}_{m\in \NN}$ is a Cauchy sequence in the complete metric space $(\K(\RR), d_H)$. Thus the set $e^{A}$ is well defined.

\begin{question}  
Is it true that a typical $A \in \K$ has $\dim_H e^{A}=0$?
\end{question}

\section{Typical continuous functions}

Let $\C=\C([0,1])$ be all the continuous functions on $[0,1]$. The distance of continuous functions $f,g\in \C$ is defined by 
\[
d(f,g)=\max\{|f(x)-g(x)|: x \in [0,1]\}.
\]
Let $A \subset [0,1]\times \RR$. Define
\[
\C_A=\{f\in \C: G(f) \cap A \neq \emptyset\}
\]
where $G(f)$ is the graph of function $f$. Let $x\in [0,1]$ and $V_x=\{x\}\times \RR$. In the following we only consider the case $A\subset V_x$. For this special case, we have the following  similar result to Theorem \ref{thm:hitting}.

\begin{proposition}\label{pro:continuous}
A subset $A\subset V_x, x \in [0,1]$ is nowhere dense  in $V_x$ if and only if $\C_A$ is nowhere dense in $\C$. 
\end{proposition}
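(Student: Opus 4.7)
The plan is to mirror the proof of Theorem \ref{thm:hitting}, replacing the role of dyadic cubes by a pointwise perturbation of $f$ near $x$. Identifying $V_x$ with $\RR$ via the second coordinate, set $\widetilde A = \{y \in \RR : (x,y) \in A\}$, so that $\C_A = \{f \in \C : f(x) \in \widetilde A\}$ and nowhere-density of $A$ in $V_x$ is equivalent to nowhere-density of $\widetilde A$ in $\RR$.

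For the forward implication, fix $f \in \C$ and $\varepsilon > 0$. Since $\widetilde A$ is nowhere dense in $\RR$, the open interval $(f(x) - \varepsilon/4, f(x) + \varepsilon/4)$ contains an open subinterval $(a,b)$ disjoint from $\widetilde A$; pick $y \in (a,b)$ and $\delta > 0$ with $(y-\delta, y+\delta) \subset (a,b)$. By continuity of $f$ at $x$, choose $\eta > 0$ so small that $|f(t) - f(x)| < \varepsilon/4$ on $[x-\eta, x+\eta] \cap [0,1]$, and define $g \in \C$ by linearly interpolating $f$ on this window with $g(x) = y$, and setting $g = f$ elsewhere. A short triangle-inequality estimate yields $d(g, f) < \varepsilon/2$, while any $h \in \C$ with $d(h, g) < \delta$ satisfies $h(x) \in (y-\delta, y+\delta)$, which is disjoint from $\widetilde A$. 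Hence the $\delta$-ball around $g$ lies inside the $\varepsilon$-ball around $f$ and is disjoint from $\C_A$, proving $\C_A$ is nowhere dense in $\C$.

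For the converse I argue by contraposition. Assume $\widetilde A$ is not nowhere dense in $\RR$, so its closure contains an open interval $(a, b)$. Then $U = \{f \in \C : f(x) \in (a,b)\}$ is a nonempty open subset of $\C$, and for any $f \in U$ and any $\varepsilon > 0$ I can pick $y \in \widetilde A$ with $|y - f(x)| < \varepsilon/4$ (possible because $f(x) \in \overline{\widetilde A}$) and apply the same piecewise linear modification to obtain $g \in \C_A$ with $d(g, f) < \varepsilon/2$. Therefore $U \subset \overline{\C_A}$, so $\overline{\C_A}$ has nonempty interior, and $\C_A$ is not nowhere dense in $\C$.

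The only small technicality is the boundary case $x \in \{0,1\}$, where the interpolation window becomes one-sided; the construction is still well defined. I do not anticipate any real obstacle, since the whole argument rests on the observation that the evaluation map $f \mapsto f(x)$ from $(\C, d)$ to $\RR$ is continuous, surjective, and open, so it pulls back and pushes forward nowhere-dense sets.
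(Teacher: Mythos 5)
Your proposal is correct and follows essentially the same route as the paper: in the forward direction you perturb $f$ so that its value at $x$ lands in a gap of $A$ (the paper leaves the construction of the nearby $g$ implicit, you make it explicit via interpolation), and in the converse you use the same contrapositive observation that if $\overline{A}$ contains an interval $(a,b)$ of $V_x$ then the open set $\{f: f(x)\in(a,b)\}$ lies in $\overline{\C_A}$. The only cosmetic point is that you should take $\delta\leq\varepsilon/2$ to guarantee the $\delta$-ball around $g$ sits inside the $\varepsilon$-ball around $f$, which is harmless.
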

\begin{proof}
Suppose $A$ is nowhere dense in $V_x$. Let $U_{\C}(f,\varepsilon)$ be an open ball in $\C$ with center $f$ and radius $\varepsilon$. Then by the nowhere dense of $A$ there exist $g\in \C, \varepsilon'>0$ such that 
\[
U(g(x), \varepsilon')\cap A=\emptyset, \text{ and } U(g(x), \varepsilon') \subset U(f(x),\varepsilon).
\]
Here $U(f(x), \varepsilon)$ is an open ball in $V_x$ with center $f(x)$ and radius $\varepsilon$. Note that $U_{\C}(g,\varepsilon') \cap \C_A=\emptyset$. By the arbitrary choice of $f\in \C$ and $\varepsilon$ we obtain that $\C_A$ is nowhere dense.

By applying the same argument as in the proof of Theorem \ref{thm:hitting}, we obtain that if $\C_A$ is nowhere dense then $A$ is nowhere dense. 
\end{proof}



Applying the same argument as in the introduction, we obtain that if  $A\subset V_x, x \in [0,1]$ is of first category in $V_x$ then $\C_A$ is of first category in $\C$. Again we do not know that if the converse claim is also true. Let $z \in [0,1] \times \RR$ then Proposition \ref{pro:continuous} claims that $\C_{z}$ is nowhere dense in $\C$. Since the rational points in plane is countable, we obtain that the graph of a typical continuous function of $\C$ does not contain any rational points in plane.

Maga \cite{Maga} proved that for any distinct points $\{x,y,z\}\subset \RR^{2}$, there exists a compact set $ E \subset \RR^{2}$  with $\dim_H E=2$ and $E$ does not contain a similar copy of  $\{x,y,z\}$. Motived by this result and Theorem \ref{thm:type} we ask the following question.

\begin{question}
Does the graph of a continuous function with Hausdorff dimension larger than one contains three points which are the vertices of an equilateral triangle?
\end{question}

\noindent\textbf{Acknowledgements.} I thank Ville Suomala for helpful discussions, and Meng Wu and Wen Wu for valuable comments.

\end{document}